\theoremstyle{plain}
\newtheorem{thm}{Theorem}[section]
\newtheorem{lem}[thm]{Lemma}
\newtheorem{prop}[thm]{Proposition}
\newtheorem{rem}[thm]{Remark}
\newtheorem*{conj}{Conjecture}
\newcommand{\Sth}{S^3}
\newcommand{\Hth}{\mathbb{H}^3}
\newcommand{\PSLTC}{PSL(2, \mathbb{C})}
\newcommand{\PSLTOT}{PSL(2, \mathbb{O}_3)}
\newcommand{\PGLTOT}{PGL(2, \mathbb{O}_3)}
\newcommand{\Z}{\mathbb{Z}}
\newcommand{\Q}{\mathbb{Q}}
\newcommand{\bnm}{\beta_{n,m}}
\newcommand{\onm}{\Omega_{n,m}}
\begin{document}

\title{Commensurability classes containing three knot complements}

\author{Neil Hoffman}
\address{Department of Mathematics\\
Universtiy of Texas\\ 
Austin, TX 78712, U.S.A.}
\email{nhoffman@math.utexas.edu}

\date{\today}

\begin{abstract}
This paper exhibits an infinite family of hyperbolic knot 
complements that have three knot complements in their 
respective commensurability 
classes.
\end{abstract}

\maketitle

\section{Introduction}\label{intro}

The study of the commensurability classes of hyperbolic knot complements that
contain other knot complements has attracted some recent interest 
(see \cite{BBW},\cite{CD}, \cite{GHH}
\cite{HS},\cite{MM}, \cite{NR1},\cite{Reid}, \cite{RW}). A particularly 
interesting set of examples results from cyclic 
surgeries on hyperbolic knot complements, since the 
cyclic surgeries
give rise to cyclic covers 
by other knot complements (see \cite{GW}). Moreover, The Cyclic Surgery Theorem 
\cite{CGLS} shows that there are at most two non-trivial cyclic 
surgeries on a hyperbolic knot complement and so a hyperbolic 
knot complement has at most two non-trivial, 
finite sheeted covers which are  
other knot complements.  Similarly, if a hyperbolic
knot complement, $S^3-k_1$ is covered by another knot complement, $S^3-k_2$,
then $S^3-k_1$ admits a cyclic surgery. There are known examples of hyperbolic 
knot complements with exactly three knot complements 
in their commensurability classes. 
For example, the $(-2,3,7)$ pretzel knot of \cite{FS} famously 
admits two non-trivial cyclic surgeries and is therefore covered by 
two other hyperbolic knot complements.

An infinite family of pairs of commensurable hyperbolic knot complements 
was constructed by W. Neuman. 

For a discussion of this construction, see  \cite{GHH}.

Finally, two hyperbolic knot complements can be commensurable if they both
have hidden symmetries. This property is equivalent to 
both knot complements non-normally covering 
the same orbifold (see $\S$ \ref{prelim_cusps}). 
The dodecahedral knots of \cite{AR} 
admit the only known examples of non-arithmetic knot complements with hidden
symmetries (see \cite{NR1})
and the figure 8 knot complement is the only arithmetic knot complement 
(see \cite{Reid}).  

This discussion motivates
the following conjecture of Reid and Walsh 
(see \cite[Conj 5.2]{RW}).

\begin{conj}
Let $\Sth - K$ be a hyperbolic knot complement. Then, 
there are at most two other knot complements
in its commensurability class.
\end{conj}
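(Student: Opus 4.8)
The plan is to reduce everything to the common minimal orbifold of the commensurability class and then to organize the argument around the dichotomy, stressed in the introduction, between knot complements with and without hidden symmetries. First I would dispose of the arithmetic case: by \cite{Reid} the figure-eight knot complement is the only arithmetic hyperbolic knot complement and its class can be examined directly, so assume $\Sth - K$ is non-arithmetic and write $\GK = \pi_1(\Sth - K) \subset \PSLTC$. By Margulis's theorem $Comm^+(\GK)$ is then discrete, so $O = \Hth / Comm^+(\GK)$ is a finite-volume orientable orbifold that is covered by everything in the class; in particular every knot complement commensurable with $\Sth - K$ covers $O$. Since a knot complement has a single cusp while distinct cusps of $O$ have disjoint, nonempty preimages, $O$ has exactly one cusp, and its Euclidean cross-section is one of the five orientable flat $2$-orbifolds covered by the torus: $T^2$, $S^2(2,2,2,2)$, $S^2(2,4,4)$, $S^2(2,3,6)$, or $S^2(3,3,3)$.

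Next I would split on whether $\Sth - K$ has hidden symmetries, i.e. on whether $Comm^+(\GK)$ strictly contains the orientation-preserving normalizer $N^+(\GK)$. Suppose first that it does not, so $Comm^+(\GK) = N^+(\GK)$ and $\Sth - K$ covers $O$ regularly with deck group its orientation-preserving symmetry group. Setting $C := Comm^+(\GK)$, which is a commensurability invariant, one is reduced to counting the finite-index subgroups $\Gamma' \le C$ for which $\Hth/\Gamma'$ is a knot complement in $\Sth$. The engine for bounding this count is that the meridian of each such knot should descend to a commensurability-invariant slope on the single cusp of $O$, so that every knot in the class is recovered from $O$ by a slope filling; the Cyclic Surgery Theorem \cite{CGLS}, together with the observation from the introduction that being covered by a further knot complement forces a non-trivial cyclic surgery, then caps the number of admissible slopes. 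Turning this into the sharp bound of three --- proving that the meridian slope is genuinely invariant and that the constraints $H_1(\Sth - K') = \Z$ and the Gordon--Luecke theorem leave no further freedom --- is the technical core of this case, and I expect the notion of cyclically commensurable knots to carry the argument.

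The main obstacle, and the reason the statement is still a conjecture, is the case in which $\Sth - K$ has hidden symmetries. This happens in particular whenever the cusp cross-section of $O$ is one of the rigid orbifolds $S^2(2,4,4)$, $S^2(2,3,6)$, or $S^2(3,3,3)$, which by \cite{NR1} forces hidden symmetries. Here the covering $\Sth - K \to O$ need not be regular and $Comm^+(\GK)$ properly contains $N^+(\GK)$, so the slope-filling bookkeeping of the previous paragraph collapses and the meridian need no longer descend to a single well-defined slope. The only knots known to occur in this regime are the dodecahedral knots of \cite{AR}, and although knots with hidden symmetries are expected to be very rare, even this is unproven. I would try to bound the number of one-cusped fillings of a rigid-cusped orbifold that are knot complements by exploiting that the enlarged hidden-symmetry group must permute the candidate meridians, but carrying this out uniformly across all such orbifolds is exactly the gap that keeps \cite[Conj 5.2]{RW} out of reach.
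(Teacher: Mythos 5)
There is no ``paper proof'' to compare against here: the statement you set out to prove is the Reid--Walsh conjecture (\cite[Conj 5.2]{RW}), which the paper records precisely as an open conjecture. The paper's actual contribution is complementary to it --- it constructs the orbifolds $\bnm$ and shows each is covered by exactly three knot complements, proving that the conjectured bound is sharp, and it cites Boileau--Boyer--Walsh \cite[Thm 1.3]{BBW} for the one substantial partial result, namely that the conjecture holds for knot complements \emph{without} hidden symmetries. So the relevant question is whether your argument closes the conjecture, and by your own admission it does not.

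The gap is the one you concede in your final paragraph: when $\Sth - K$ has hidden symmetries, $Comm^+(\GK)$ properly contains $N^+(\GK)$, the cover of the minimal orbifold $O$ is non-normal, the meridian need not descend to a well-defined slope on the rigid cusp of $O$, and no argument is known that bounds the number of knot complements covering such an $O$. Declaring this ``exactly the gap that keeps the conjecture out of reach'' is an accurate description of the state of the art, but it turns your submission into a research plan rather than a proof. Moreover, even your no-hidden-symmetry half is not actually carried out: the claims that commensurable knot complements must then be cyclically commensurable, that the meridian descends to an invariant slope on the single cusp of $\Hth/N^+(\GK)$, and that the Cyclic Surgery Theorem \cite{CGLS} then caps the count at three constitute the content of \cite[Thm 1.3]{BBW} --- a full paper-length argument --- which you flag as ``the technical core'' but do not supply. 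Your framing (Margulis discreteness of the commensurator in the non-arithmetic case, the one-cusped minimal orbifold, rigid cusps forcing hidden symmetries, Reid's theorem disposing of the arithmetic case) is correct and matches the paper's introductory discussion; what is missing is any proof of the two halves on which the whole argument rests.
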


It has been announced by Boileau, Boyer, and Walsh (\cite[Thm 1.3]{BBW}) 
that the conjecture holds for knot complements without hidden symmetries. 
In their paper, they show that if a hyperbolic knot complement does not admit hidden
symmetries, then any commensurable hyperbolic knot complement will
cover a common orbifold. Furthermore, this orbifold admits
a finite cyclic surgery for each knot complement that covers it. This paper
presents a family of such orbifolds that are covered by exactly three
hyperbolic knot complements.
Specifically, the main theorem 
of this paper is 
the following (see $\S$ \ref{prelim} for definitions):

\begin{thm}\label{mainthm1}
Let $n\geq 1$ and $(n,7)=1$. For all but at most
finitely many pairs of integers $(n,m)$, the result of $(n,m)$ Dehn surgery 
on the unknotted cusp of the Berge manifold is a 
hyperbolic orbifold with 
exactly three knot complements its commensurability classes.
\end{thm}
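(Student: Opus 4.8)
The plan is to reduce the statement, via the Boileau--Boyer--Walsh criterion quoted in the introduction, to the problem of counting knot-complement covers of a single orbifold, and then to carry out that count. Write $B$ for the Berge manifold, realized as $\Sth-(k\cup A)$ with $A$ the unknotted component, and let $\onm$ denote the result of the $(n,m)$ surgery on the cusp of $A$. I would interpret this surgery as an orbifold Dehn filling whose core is a cone geodesic of order $n$, so that $\onm$ carries the cusp inherited from $k$ together with an order-$n$ singular axis along $A$; concretely $\onm$ is the quotient of a one-cusped hyperbolic manifold by a $\ZnZ$-rotation about $A$. Since the knots in question have no hidden symmetries, their complements all cover this common minimal orbifold, and the knot complements in the commensurability class of $\onm$ are exactly the covers $\Sth-k'\to\onm$ whose single cusp is a genuine knot cusp in $\Sth$. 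Counting these covers is then the whole game.

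For the hyperbolic and ``all but finitely many'' clauses I would apply Thurston's hyperbolic Dehn surgery theorem to the $A$-cusp of $B$: all but finitely many slopes yield a complete finite-volume hyperbolic orbifold $\onm$, with the filling core a short singular geodesic. The next task is to certify that $\onm$ really is the minimal orbifold of its class. Because the figure-eight complement is the only arithmetic knot complement, any arithmetic $\onm$ could support at most that one knot-complement cover; so for our purposes $\onm$ must be non-arithmetic, which I would verify directly, after which Margulis's theorem makes its commensurator discrete and the minimal orbifold well defined. I would then compute the orientation-preserving symmetry group of $\onm$ and check that it equals the normalizer of $\pi_1\onm$ in that commensurator --- i.e.\ that $\onm$ admits no hidden symmetries. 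I expect the hypothesis $(n,7)=1$ to be exactly what rules out the finitely many residues of $n$ at which a sevenfold coincidence in the Berge manifold would create extra symmetry and identify covers that are otherwise distinct.

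The count itself proceeds by bounding from both sides. For the lower bound I would exhibit three distinct knots $k'$ in $\Sth$ whose complements cover $\onm$, arising from the lens-space (cyclic) surgery structure of the Berge knot, in the same spirit as the two cyclic surgery slopes of the $(-2,3,7)$ pretzel producing two covering knot complements. For the upper bound I would invoke the Cyclic Surgery Theorem, which caps the number of nontrivial cyclic surgeries --- and hence of covering knot complements --- and combine this with the explicit structure of the commensurator to exclude any fourth knot complement in the class.

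The step I expect to be the main obstacle is the \emph{exactness} in the final count. ``At most three'' and ``at least three'' draw on genuinely different inputs, and the delicate point is showing simultaneously that all three covers are complements of honest knots in $\Sth$ (rather than knots in other lens spaces) and that no two of them coincide under commensurability. This forces one to read off the precise surgery coefficients of the Berge link and to use $(n,7)=1$ to guarantee that none of the three candidates degenerates or merges with another; it is this explicit bookkeeping, rather than any single conceptual hurdle, where the real work of the theorem lies.
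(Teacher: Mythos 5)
Your overall skeleton --- hyperbolicity of all but finitely many $\bnm$ via Thurston's Dehn surgery theorem, a lower bound of three covering knot complements coming from Berge's knots in solid tori with three solid-torus fillings, and the reduction of the upper bound to absence of hidden symmetries via the Boileau--Boyer--Walsh theorem --- is the same as the paper's. But there is a genuine gap at exactly the step that carries all the weight of the theorem: you propose to ``verify directly'' that $\onm$ is non-arithmetic and then to ``compute the orientation-preserving symmetry group of $\onm$ and check'' that there are no hidden symmetries. That is a finite computation for any \emph{single} pair $(n,m)$ --- indeed the paper's Remark 4.1 does precisely this with snap for $m=0$, $n=2,\dots,7$ --- but the theorem quantifies over an infinite family, and the invariant trace fields, volumes, and symmetry groups of the $\bnm$ all vary with the filling. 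You give no mechanism making the verification uniform in $(n,m)$, so your outline proves ``exactly three'' only for the finitely many orbifolds one can individually check, which is not the statement.

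The paper closes this gap with a compactness argument that your proposal has no analogue of. Suppose infinitely many of the covering knot complements had hidden symmetries; each then non-normally covers a rigid-cusped orbifold $Q_{n,m}$. Since $vol(\bnm)$ is bounded above by the volume of the Berge manifold and every cusped orbifold has volume at least $v_0/12$ (Meyerhoff), one can pass to a subsequence on which the cusp type and the covering degree $d$ are constant; algebraic and geometric convergence of the holonomy groups $\onm \to \Omega_L$ under Dehn surgery then produces, in the limit, a two-cusped orbifold with a torus cusp and a rigid cusp covered by the Berge manifold itself. This is ruled out by the paper's Lemma 2.2, whose proof uses arithmeticity of the Berge manifold, the class number of $\Q(\sqrt{-3})$, the Adams and Neumann--Reid lists of small-volume cusped orbifolds, and GAP computations. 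Without something playing the role of this limiting argument, the core of the theorem is unproved. Two smaller inaccuracies: the singular core of $\bnm$ has order $\gcd(n,m)$, not $n$ (e.g.\ $\beta_{1,m}$ is a manifold, and in general $\bnm$ need not be a global quotient of a one-cusped manifold by $\ZnZ$); and the hypothesis $(n,7)=1$ is not about extra symmetries --- it is used, via the fact that the two components of the Berge link have linking number $7$, to guarantee that the preimage of the knot exterior in each of the three cyclic $S^3$-covers has connected boundary, i.e.\ that the three covers are knot complements rather than link complements.
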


The infinite family of orbifolds described by Theorem \ref{mainthm1} 
which we refer to as $\bnm$ 
(see $\S$\ref{prelim}) also has 
the property that for $n \ne 1$, each knot complement covering 
$\bnm$ admits an $n$-fold symmetry which does not fix any point on the
cusp. In particular, even when $n=2$, this symmetry is not a 
strong involution.  
By \cite{WZ}, such a knot complement cannot 
admit a lens space surgery and so, by the above discussion, is not 
covered by any other knot complement.

The paper is organized as follows. In addition to some background material
and definitions, 
$\S$ \ref{prelim} we prove a lemma about possible orbifold quotients
of the Berge manifold.
In $\S$ \ref{threeCyclicSurgeries}, we show that the orbifolds $\bnm$ are 
shown to admit three cyclic surgeries, and   
the proof of the main theorem is contained 
in $\S$\ref{mainproof}. In $\S$ \ref{remarks},
we provide a partial classification of commensurability classes containing
three knot complements.

\section{Preliminaries}\label{prelim}
\subsection{}\label{prelim_com}
Two hyperbolic 3-orbifolds, $\Hth/\Gamma_1$ and $\Hth/\Gamma_2$, 
are said to be commensurable if they share a common finite 
sheeted cover. In terms of groups, $\exists g \in \PSLTC$ so
that    
$\Gamma_1$ and $g\Gamma_2 g^{-1}$ have a common subgroup which is
finite index in both groups.

Let $Comm^+(\Gamma) = \{g\in \PSLTC | [\Gamma : \Gamma \cap g\Gamma g ^{-1}]
< \infty \mbox { and }
[g\Gamma g^{-1} : \Gamma \cap g\Gamma g ^{-1}] < \infty \}$ and $N^+(\Gamma)$ be the normalizer of $\Gamma$ in $\PSLTC$. 
We say that a group $\Gamma$ has \emph{hidden symmetries} if
$[Comm^+(\Gamma):N^+(\Gamma)]>1$. A hyperbolic orbifold, M, 
has \emph{hidden symmetries} if $\pi_1^{orb}(M)$ has hidden symmetries. 
For this discussion, we consider only orientable manifolds and 
orbifolds.

\subsection{}\label{prelim_cusps}
When a hyperbolic knot group has hidden symmetries
the associated knot complement non-normally covers some 
orbifold with a \emph{rigid cusp} i.e. the cusp is $C\times[0,\infty)$ 
where $C$ is $S^2(2,3,6)$, 
$S^2(3,3,3)$ or $S^2(2,4,4)$ (see \cite[Lemma 4]{Reid}).

By \cite[Prop 2.7]{NR1}, the cusp field of a hyperbolic orbifold is a subfield
of the invariant trace field. Thus, if a hyperbolic 
orbifold has a $S^2(3,3,3)$ or
$S^2(2,3,6)$ cusp, $\Q(\sqrt{-3})$ must be a subfield of the orbifold's 
invariant trace field and if the cusp is $S^2(2,4,4)$, $\Q(i)$ 
must be a subfield of the orbifold's invariant trace field (see \cite[Proof of
Thm 5.1(iv)]{NR1}).

\begin{prop}\label{prop_degree}
Let $p: O_1 \rightarrow O_2$ be a covering of orbifolds 
such that $O_1$ has a rigid
cusp $C_1$. Then, $O_2$ has a rigid cusp 
$C_2$ such that $p(C_1) = C_2$
and if $x \in C_2$ then $|p^{-1}(x)\cap C_1|=n^2$ for some integer n
unless $C_1$ is $S^2(3,3,3)$ and $C_2$ is
$S^2(2,3,6)$ then $|p^{-1}(x)\cap C_1|=2n^2$ for some integer n.
\end{prop}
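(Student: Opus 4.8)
The plan is to work locally on the cusp cross-sections, since rigidity is a statement about the Euclidean 2-orbifolds $C_1$ and $C_2$ and the induced covering between them. First I would observe that a covering of orbifolds $p\colon O_1 \to O_2$ restricts to a covering of the cusp cross-section of $O_1$; because $C_1$ is rigid, its cusp cross-section is one of the three Euclidean turnovers/quotients $S^2(2,3,6)$, $S^2(3,3,3)$, $S^2(2,4,4)$, each of which is \emph{rigid} in the sense that it admits no nontrivial deformations and, crucially, its orbifold fundamental group is the $(2,3,6)$-, $(3,3,3)$-, or $(2,4,4)$-triangle/Euclidean crystallographic group. A neighborhood of the cusp $C_1$ maps to a neighborhood of a cusp of $O_2$, and I would argue that this image cusp $C_2$ must itself be rigid: a covering of Euclidean 2-orbifolds cannot take a rigid orbifold onto a nonrigid one (a $(2,2,2,2)$ or annular cusp), because the maximal order of a cone point is preserved or increased under passing to quotients, and the rigid cusps are exactly those whose cusp groups contain rotations of order $>2$.

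Having established that $C_2$ is rigid and $p(C_1)=C_2$, the heart of the argument is the degree computation $|p^{-1}(x)\cap C_1| = n^2$. The key step is to pass to the universal cover $\mathbb{R}^2$ (equivalently $\mathbb{C}$) of the Euclidean cusp orbifold: the cusp groups are lattices acting on $\mathbb{R}^2$ together with point-stabilizing rotations, so $\pi_1^{orb}(C_1)$ and $\pi_1^{orb}(C_2)$ both contain a translation lattice $\Lambda_1 \subset \Lambda_2$ of finite index. I would compute the degree of the cusp covering as the index $[\pi_1^{orb}(C_2):\pi_1^{orb}(C_1)]$, and the count of preimages of a generic point $x$ (away from cone points) equals this degree. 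The crucial arithmetic fact is that for the rigid Euclidean orbifolds the rotational parts are fixed: when $C_1$ and $C_2$ are the \emph{same} type (both $(2,3,6)$, both $(3,3,3)$, or both $(2,4,4)$), the rotation subgroups coincide, so the degree equals the index of the translation lattices $[\Lambda_2:\Lambda_1]$, which — because $\Lambda_i$ are lattices in $\mathbb{C}$ invariant under the relevant order-$6$, order-$3$, or order-$4$ rotation — is forced to be a norm from $\mathbb{Z}[\omega]$ (for $(2,3,6)$ and $(3,3,3)$) or $\mathbb{Z}[i]$ (for $(2,4,4)$), hence a sum expressible as $a^2+ab+b^2$ or $a^2+b^2$; in any case such a lattice index is realized as the squared modulus $|\lambda|^2 = n^2$ only after accounting for the rotational symmetry, giving a perfect square $n^2$.

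The one exceptional case, $C_1 = S^2(3,3,3)$ covering $C_2 = S^2(2,3,6)$, arises precisely because these two orbifolds have the \emph{same} underlying translation lattice scale but different rotational parts: $\pi_1^{orb}(S^2(2,3,6))$ contains $\pi_1^{orb}(S^2(3,3,3))$ as an index-$2$ subgroup (the $(3,3,3)$ group is the orientation-compatible index-two subgroup obtained by forgetting the order-$2$ rotation). I would make this explicit by writing the $(2,3,6)$ group as the $(3,3,3)$ group extended by a single order-$2$ rotation that permutes the three cone points, so that the degree of any covering $C_1\to C_2$ in this mixed case is $2$ times the degree coming purely from the lattice index, yielding $2n^2$. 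The factor of $2$ is exactly the index $[\pi_1^{orb}(S^2(2,3,6)) : \pi_1^{orb}(S^2(3,3,3))]$, and every other ratio of rotation subgroups among the rigid orbifolds is trivial, which is why no other exceptional constants appear.

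I expect the main obstacle to be the bookkeeping that rules out all \emph{other} mixed pairs and shows the clean $n^2$-versus-$2n^2$ dichotomy: one must verify that $S^2(2,4,4)$ can only cover $S^2(2,4,4)$ (the order-$4$ rotation cannot map into a group with rotations of incompatible order dividing the covering cleanly), that $S^2(2,3,6)$ covering $S^2(2,3,6)$ or $S^2(3,3,3)$ covering $S^2(3,3,3)$ give honest squares, and that the translation-lattice index in each case is genuinely a norm $n^2$ and not merely a product of the form $a^2+ab+b^2$. I would resolve this by fixing, for each rigid type, the \emph{canonical} lattice ($\mathbb{Z}[\omega]$ or $\mathbb{Z}[i]$) forced by the rotational symmetry and checking directly that any sublattice invariant under the same rotation has index equal to the norm of a single element, hence a perfect square times the appropriate rotational factor. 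The subtlety is entirely in confirming the compatibility constraints on rotation orders under covering maps, which is what pins down both the square structure and the lone factor of $2$.
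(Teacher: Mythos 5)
Your overall strategy---restricting to the cusp cross-sections, identifying the peripheral groups as (lattice)$\rtimes$(rotation) wallpaper groups, listing which rigid type can cover which, and treating the exceptional pair $S^2(3,3,3)\to S^2(2,3,6)$ via the index-two inclusion of rotation subgroups over a common lattice---is the same as the paper's. The fatal problem is precisely the step you flag as the ``main obstacle,'' and your proposed resolution of it is a non sequitur. You correctly observe that a sublattice of $\mathbb{Z}[i]$ (resp.\ $\mathbb{Z}[\omega]$, $\omega = e^{2\pi i/3}$) invariant under the order-$4$ (resp.\ order-$3$ or order-$6$) rotation is an ideal, hence has index equal to the norm of a single element; but you then assert this norm is ``a perfect square,'' which is false: norms have the form $a^2+b^2$, resp.\ $a^2-ab+b^2$, and these are not generally squares. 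Concretely, $(1+i)\mathbb{Z}[i]$ is invariant under multiplication by $i$ and has index $2$ in $\mathbb{Z}[i]$; conjugating by $z\mapsto (1+i)z$ shows that $(1+i)\mathbb{Z}[i]\rtimes\mathbb{Z}/4\mathbb{Z}$ is again a $(2,4,4)$ group, so there is a genuine degree-$2$ orbifold covering $S^2(2,4,4)\to S^2(2,4,4)$. Likewise $(1-\omega)\mathbb{Z}[\omega]$ yields a degree-$3$ covering $S^2(3,3,3)\to S^2(3,3,3)$, and such coverings are realized by cusps of coverings of hyperbolic $3$-orbifolds (e.g.\ congruence subgroups of Bianchi groups, such as the subgroup of $PSL(2,\mathbb{Z}[\omega])$ of matrices whose upper-right entry lies in $(1-\omega)$). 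So the $n^2$ versus $2n^2$ dichotomy cannot be extracted from rotation-invariance of the lattice, and no amount of bookkeeping will produce it: the gap is unfillable as the statement stands.

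For comparison, the paper's own proof glosses over exactly this point: it posits that the translation subgroup of $P_1$ has the form $n\mathbb{Z}\times m\mathbb{Z}$ in a basis interchanged by the order-$4$ rotation and concludes $n=m$, but that hypothesis silently excludes the ideal sublattices above ($(1+i)\mathbb{Z}[i]$ is not of the form $n\mathbb{Z}\times m\mathbb{Z}$ in any such basis), so it too establishes only a special case. What your method does prove, once carried out correctly, is that the cusp covering degree is a \emph{norm}---of the form $a^2+b^2$ for $S^2(2,4,4)$, of the form $a^2-ab+b^2$ for $S^2(3,3,3)$ and $S^2(2,3,6)$, and twice such a norm in the mixed case. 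That weaker statement is in fact what the application in Lemma \ref{coveringLemma} needs at its final step: no integer congruent to $2 \pmod 3$, in particular no integer of the form $8-3l$, is a norm from $\mathbb{Q}(\sqrt{-3})$, so the covering degree there still can never be $8$.
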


\begin{proof}
First consider the case where $C_1$ is an $S^2(2,4,4)$.
In this case, $C_2$ must also be a $S^2(2,4,4)$ cusp.
The peripheral subgroup corresponding to $C_2$ 
is $P_2 \cong (\Z\times\Z)\rtimes_{\phi} \Z/4\Z$, and so $P_2$ 
has an element of order 4 acting on the cusp. Thus,
$\phi: \Z/4\Z \rightarrow Aut(\Z \times \Z)$ 
is a faithful representation. Let $P_1 \subset P_2$ be the peripheral 
subgroup corresponding to $C_1$. So 
$P_1 \cong (n\Z \times m\Z)\rtimes_{\phi} \Z/4\Z$. 
However, the order 4 automorphism switches the two 
generators for the $\Z\times \Z$ subgroup of $P_2$. 
Thus, $n=m$ and the degree of the covering is $n^2$.

A similiar proof carries through if $C_1$ and $C_2$ are both 
either $S^2(3,3,3)$ or $S^2(2,3,6)$ cusps.

In the case, where $C_1$ is a $S^2(3,3,3)$ and $C_2$ is 
a $S^2(2,3,6)$ cusp, the $\Z/3\Z$ subgroup of $P_1$ is index 2 in the 
$\Z/6\Z$ subgroup of $P_2$. Hence, the covering degree is $2n^2$.   

\end{proof}

\subsection{}\label{bergeDef} 
For $n\geq 1$ and $(n,7)=1$, let $\bnm$ be the orbifold obtained by $(n,m)$ 
Dehn surgery on the unknotted cusp of the Berge manifold 
(see Figure \ref{BergeManifold}) using a standard framing on the cusps 
of this link complement as in \cite{Rolfsen}. 
%Denote the link group of the Berge
%manifold by $\Omega_L$ and set $\onm=\pi_1 ^{orb}(\bnm).$

\begin{figure}[ht]
\begin{center}
\includegraphics[height= 2 in]{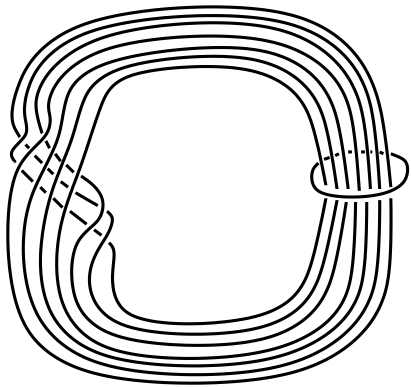}
\caption{\label{BergeManifold} The Berge manifold is the complement of this
link.}
\end{center}
\end{figure}

The Berge manifold admits several surgery slopes of 
interest. First if we perform Dehn surgery along the $(1,0)$ slope of 
the unknotted cusp of the 
Berge manifold, we will obtain the $(-2,3,7)$ pretzel knot (see \cite{FS}). 
Also, if we 
drill out a solid solid torus along the unknotted 
cusp of the manifold we would obtain
the one of the knots in the solid torus that admits three $D^2\times S^1$ 
fillings (see \cite[Cor 2.9]{Berge}). Furthermore, if we perform Dehn surgery
along the $(1,r)$ 
slope and then drill along the core of the surgered torus, we would 
also obtain a knot complement in $D^2\times S^1$ that admits three 
$D^2 \times S^1$ surgeries. In fact, by the above mentioned corollary, 
these are the only knots in solid tori with this property. 

The above constuction shows that Dehn surgery along a $(1,r)$ 
slope of the unknotted cusp of the
Berge manifold produces knot complements that admit three lens space 
surgeries. In fact, it is well known that the $(1,0)$, $(18,1)$ and $(19,1)$ 
surgery slopes on the $(-2,3,7)$ pretzel knot admit lens space surgeries
(see \cite{FS}). By drilling out the unknotted cusp of the Berge 
manifold, these are also the 
surgery slopes that produce a solid torus filling.
Since the linking number of the knotted cusp and the unknotted cusp is
$7$, the longitude gets sent to the curve 
$(49r,1)$ after $(1,r)$ Dehn surgery on 
the unknotted cusp while the meridian $(1,0)$ remains 
fixed (see \cite[Sect 9.H]{Rolfsen}). So the $(1,0)$, $(18,1)$, 
and $(19,1)$ surgery parameters get sent to 
$(1,0)$, $(49r+18,1)$, and $(49r+19,1)$ respectively after
$(1,r)$ Dehn surgery on the unknotted cusp. Furthermore, we can use the 
surgery paramters to compute the homology of the manifolds resulting from
lens space surgeries on the knot complements. 
In fact, we see that for these knots we obtain 
$S^3$ and two lens spaces - one with 
fundamental group of order $|49r+18|$ 
and another of order $|49r+19|$.

More generally, if we allow Dehn surgery
along any $(p,q)$ slope
of the unknotted cusp of the Berge manifold where (p,q)=1,
and either $(1,0)$, $(18,1)$, or $(19,1)$ Dehn surgery on the knotted cusp, 
we will also get lens spaces.
Again, by \cite[Sect 9.H]{Rolfsen}, we see that the $(1,0)$ 
surgery slope corresponds to a lens space of 
order $|p|$, $(18,1)$ surgery slope corresponds to a lens space of 
order $|49q+18p|$, and $(19,1)$ surgery slope
corresponds a lens space of order $|49q+19p|$.

\subsection{} 
Denote 
$v_0\approx 1.01494146$ as the volume 
of the regular ideal tetrahedron. The
Berge manifold is comprised of 
four such tetrahedra and therefore
its volume is $4v_0$. Denote by $\Gamma_L$ as 
the fundamental group of the Berge manifold.
Since the complement of the Berge manifold is comprised of four
regular ideal tetrahedra, 
$\Gamma_L \subset Isom^+(\mathbb{T})\cong \PGLTOT$,
where $\mathbb{T}$ is a tesselation of 
$\Hth$ by regular ideal tetrahedra. 
Hence, the Berge manifold is arithmetic.

The proof of the following lemma takes advantage of the fact that the Berge
manifold has relatively low volume in order to show that  
it cannot cover
an orbifold with a torus cusp and a rigid cusp. Where necessary, we consider 
all groups as subgroups in $\PSLTC$.

\begin{lem}\label{coveringLemma}
%(A) 
The Berge manifold does not cover an orbifold with a torus cusp and a 
rigid cusp.

\end{lem}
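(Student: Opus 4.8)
The plan is to argue by a volume and covering-degree comparison. Suppose for contradiction that the Berge manifold $M$ covers an orbifold $O$ possessing both a torus cusp and a rigid cusp. Since the Berge manifold is arithmetic with $\GL \subset \PGLTOT$, its invariant trace field is $\Q(\sqrt{-3})$. The rigid cusp of $O$ must therefore be one of $S^2(2,3,6)$, $S^2(3,3,3)$, or $S^2(2,4,4)$; but by the discussion in $\S$\ref{prelim_cusps}, an $S^2(2,4,4)$ cusp forces $\Q(i)$ into the invariant trace field, which is incompatible with $\Q(\sqrt{-3})$. Hence the rigid cusp of $O$ is $S^2(2,3,6)$ or $S^2(3,3,3)$.

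The first main step is to bound the covering degree $d = [\GL : \pi_1^{orb}(O)]$ from below using the cusps. The Berge manifold has two torus cusps, both of which are genuine torus cusps, so they must cover cusps of $O$. Because $O$ has a rigid cusp, at least one torus cusp of $M$ covers that rigid cusp, and by Proposition \ref{prop_degree} the restriction of the covering to a torus cusp covering a rigid cusp has local degree $n^2$ (or $2n^2$ in the mixed $S^2(3,3,3)\to S^2(2,3,6)$ case) with $n \geq 1$. The key observation is that the orbifold Euler characteristic of a torus is $0$ while that of a rigid cusp cross-section is strictly negative, so covering a rigid cusp by a torus forces the local degree to be large; more precisely, the area ratio of the Euclidean cusp cross-sections gives $n \geq 2$, hence a local degree of at least $4$ near that cusp. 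The second step is to turn this into a lower bound on the total degree $d$, and then compare volumes: we have $\mathrm{vol}(M) = d \cdot \mathrm{vol}(O)$, and $\mathrm{vol}(M) = 4v_0$.

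The crux of the argument, and the step I expect to be the main obstacle, is producing a volume lower bound for $O$ strong enough that $d \cdot \mathrm{vol}(O)$ exceeds $4v_0$, yielding the contradiction. An orbifold with a rigid cusp cannot have arbitrarily small volume; I would invoke the known lower bounds on volumes of cusped hyperbolic $3$-orbifolds, and in particular the minimum-volume estimates for orbifolds carrying a rigid cusp, to get $\mathrm{vol}(O) \geq c$ for an explicit constant $c$. Since $O$ has at least two cusps (one torus, one rigid) and $\Gamma_L$ has only two cusps, the covering of cusps is quite constrained, which pins down the structure further and should help sharpen the volume bound. Combining $d \geq 4$ from the cusp analysis with $\mathrm{vol}(O) \geq c$ and $4v_0 = d\cdot\mathrm{vol}(O)$, I would arrive at $4v_0 \geq 4c$, and the goal is to show $c > v_0$, contradicting the hypothesis.

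The delicate part is that the naive local-degree bound at a single cusp need not immediately force $\mathrm{vol}(O)$ up enough; the argument must exploit that both cusps of the low-volume Berge manifold are accounted for, so that the covering degree $d$ and the cusp cross-sectional areas of $O$ cannot both be small simultaneously. I would therefore organize the final computation as a case analysis over which rigid cusp appears and over the possible values of $n$, checking in each case that either the implied degree $d$ times the minimal possible $\mathrm{vol}(O)$ already exceeds $4v_0$, or the cusp-area constraints are internally inconsistent with $M$ having exactly two torus cusps of the Berge manifold's known cusp shapes. This case-by-case volume bookkeeping is where the real work lies.
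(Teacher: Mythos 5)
Your proposal follows the paper's opening moves (trace field $\Q(\sqrt{-3})$ rules out $S^2(2,4,4)$; volume times degree equals $4v_0$), but it then stalls exactly where you admit it does, and the gap is fatal. The bound you hope for --- a constant $c > v_0$ below the volume of any orbifold with a rigid cusp (or even one with both a rigid and a torus cusp) --- does not exist and cannot be invoked. The minimum volume of a non-compact hyperbolic $3$-orbifold is Meyerhoff's $v_0/12$, and it is attained by $\Hth/\PGLTOT$, which has a rigid cusp; rigid-cusped orbifolds of volume far below $v_0$ abound in exactly the commensurability class at issue here (the Berge manifold is arithmetic, commensurable with $\PGLTOT$). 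Indeed, the paper's own proof shows that volume and degree bookkeeping alone leaves two surviving candidates for an intermediate one-cusped orbifold (of volumes $v_0/6$ and $v_0/12$), and these must be killed by explicit group theory: a presentation of the orbifold group plus a GAP computation showing no index-$8$ subgroup exists in the first case, and a cusp/index analysis inside $\PSLTOT$ in the second. No purely volumetric argument can distinguish these cases, so your plan's ``case-by-case volume bookkeeping'' cannot close the proof.

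There are also two local errors in your degree estimates. First, Proposition \ref{prop_degree} applies to a \emph{rigid} cusp covering a rigid cusp; it says nothing about a torus cusp covering a rigid cusp, so the ``local degree $n^2$ or $2n^2$'' claim is a misapplication. For a torus covering $S^2(3,3,3)$ the relevant statement is that the degree is a multiple of $3$ (the translation subgroup of the $(3,3,3)$ triangle group has index $3$), which is what the paper uses ($3k$). Second, your assertion that ``the area ratio gives $n\geq 2$, hence local degree at least $4$'' is unsupported: a torus covers $S^2(3,3,3)$ with degree as small as $3$, so no such lower bound holds. The structural ingredients your proposal is missing are (i) the arithmetic input from \cite{CLR} --- class number one for $\Q(\sqrt{-3})$ forces any maximal group in the commensurability class to have exactly one cusp, producing a one-cusped orbifold $Q_M$ below the putative two-cusped $Q_T$; (ii) the classification lists of Adams and Neumann--Reid for cusped orbifolds of volume at most $v_0/3$, which reduce to finitely many explicit candidates; and (iii) the computational elimination of those candidates. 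Without (i)--(iii), the contradiction you seek never materializes.
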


\begin{proof}[Proof of \ref{coveringLemma}]
Assume $Q_T$ is an orbifold with a torus cusp and a rigid cusp covered by
the Berge manifold. 
Since the invariant trace field of the Berge manifold is
$\Q(\sqrt{-3})$, the rigid cusp of $Q_T$ must be either 
$S^2(3,3,3)$ or $S^2(2,3,6)$. 
In either case, consideration of the unknotted torus cusp 
of the Berge manifold covering the rigid cusp shows the 
degree of such a cover is $3k$ for
some integer $k\geq 1$. 
Also, since 
the Berge manifold is arithmetic and
%has an invariant trace field 
%of $\Q(\sqrt{-3})$ and 
the class number of 
$\Q(\sqrt{-3})$ is 1, it follows from \cite[Thm 1.1]{CLR}, that any maximal 
group commensurable with 
the Berge manifold has exactly one cusp. Thus, there exists 
a one-cusped orbifold $Q_M$ covered by $Q_T$. 
By consideration of the cusps of $Q_T$ 
covering the rigid cusp of $Q_M$ (see Prop \ref{prop_degree}),
we see that the covering degree 
of such a map would be $3l+n^2$ or $3l+2n^2$ for some integers $l,n$ (In the 
later case, $l$ must be even).

Thus, the covering of $Q_M$ by the Berge manifold
is of order $d=3k(3l+n^2)$ or $d=3k(3l+2n^2)$.  Now,
$d\leq 48$ (see  \cite{Me}) and  since $k,l,n \geq 1$, 
we have that $d \geq 12$. Hence, $vol(Q_M) \leq v_0/3$ if $Q_M$ has a $S^2(3,3,3)$ cusp 
and
$vol(Q_M) \leq v_0/6$ if $Q_M$ has a $S^2(2,3,6)$ cusp.

\begin{figure}[ht]
\begin{center}
\includegraphics[height = 1.8 in]{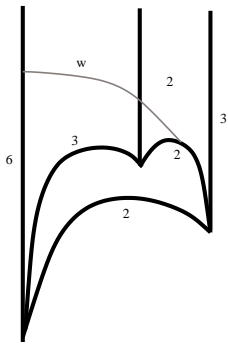}
\caption{\label{tetDomain} The fundamental domain for $\Gamma$ together
with the involution $w$}
\end{center}
\end{figure}

It follows that this orbifold must appear 
on the lists in \cite[Thm 3.3, 4.2]{A1} and \cite{NR2}.  
However, none of the orbifolds with $S^2(3,3,3)$ cusps
appearing on these lists correspond to maximal groups 
commensurable with the Berge manifold,
so we may assume that $Q_M$ has a $S^2(2,3,6)$ cusp. 
After combining the above restrictions on the degree of a 
cover and the restrictions from Adams' list, there 
are two possiblities for $Q_M$:

 \emph{either 
$Q_M$ has volume $v_0/6$ and a $S^2(2,3,6)$ cusp (here $k=1$, $l=2$, $n=1$) 
or 
$Q_M$ has volume $v_0/12$ and a $S^2(2,3,6)$ cusp
(here $k=2$, $l=2$, $n=1$)}.

First, consider the case where $Q_M$ has volume $v_0/6$. 
By noting that $\pi_1 ^{orb}(Q_M)$ has an index 2 subgroup 
$\Gamma :=<x,y,z | x^2,y^2,z^3,(yz^{-1})^2,(zx^{-1})^6,(xy^{-1})^3>$ and $\pi_1 ^{orb}(Q_M) =
<\Gamma, w>$ where $w$ is the order 2 rotation on the fundamental domain of $\Gamma$,  
we obtain a presentation for $\pi_1 ^{orb}(Q_M)$ 
 (see \cite{NR1}, \cite{MR} and Figure \ref{tetDomain}). 

Thus, we obtain the following presentation 
$$\pi_1^{orb}(Q_M)= <w,x,y,z | x^2,y^2,z^3,w^2,(yz^{-1})^2,(zx^{-1})^6,(xy^{-1})^3, (wx)^2,wywyz^{-1}>.$$
However, using GAP, the above group does not have any index 8 subgroups. 
Thus, there can be no orbifold $Q_T$.                                    

In second case, $Q_M\cong \Hth/\PGLTOT$ and 
the $[\PGLTOT:\pi_1^{orb}(Q_T)]=8$. 
If $\pi_1^{orb}(Q_T) \subset \PSLTOT$, $[\PSLTOT:\pi_1^{orb}(Q_T)]=4$. 
Using GAP, there is a unique index 4 subgroup $G$
of $\PSLTOT$. However, $G$ has finite abelianization, and therefore
cannot be the orbifold group of $Q_T$.
%does not have positive first Betti number.

Thus, we may assume that 
$\pi_1^{orb}(Q_T) \not\subset \PSLTOT$ and deduce that there is 
a unique subgroup $\Lambda$ of index 2 in $\pi_1^{orb}(Q_T)$ 
such that $\Lambda \subset \PSLTOT$. By covolume considerations
 $\Lambda$ has index 8 in 
$\PSLTOT$. Also, $\Hth/\Lambda$ has a torus cusp and an $S^2(3,3,3)$ cusp.  
Since $\Hth/\PSLTOT$ 
has an $S^2(3,3,3)$ cusp, the degree of the covering $p:\Hth/\Lambda\rightarrow \Hth/\PSLTOT$ has to be $3l+n^2$ (see Prop \ref{prop_degree}), which is never 8.

This completes the proof.
\end{proof}

\section{Cyclic Surgeries on $\bnm$}\label{threeCyclicSurgeries}

In this section, we show that for fixed n and m, $\bnm$ admits three 
finite cyclic surgeries. We also show directly it is 
covered by three knot complements if $n \ne 7$. 

\begin{lem}\label{setupbetan}
The orbifolds $\bnm$ are covered by three knot complements. Further more, the
degrees of the corresponding covering maps are distinct.
\end{lem}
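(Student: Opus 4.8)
The plan is to show that $\bnm$ is covered by exactly three knot complements by exploiting the surgery description of the Berge manifold and the lens space surgeries we have already identified. The starting observation is that $\bnm$ arises from $(n,m)$ surgery on the unknotted cusp, and the knotted cusp of the Berge manifold remains a genuine cusp of $\bnm$. The three cyclic surgery slopes $(1,0)$, $(18,1)$, $(19,1)$ on the $(-2,3,7)$ pretzel knot — equivalently the three solid-torus filling slopes on the knot in the solid torus obtained by drilling the unknotted cusp — furnish three distinct slopes on the knotted cusp of $\bnm$ whose fillings yield lens spaces. By the discussion in $\S$\ref{bergeDef}, these three fillings produce $S^3$ together with lens spaces of orders $|p|$, $|49q+18p|$, $|49q+19p|$ (with $(p,q)=(n,m)$), so for all but finitely many $(n,m)$ these three orders are distinct and none of them is $1$.

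First I would make precise that each cyclic/lens-space surgery on the cusped orbifold $\bnm$ gives rise to a cyclic cover of $\bnm$ by a knot complement, following the mechanism of \cite{GW}: a cyclic surgery on a cusped orbifold that produces a manifold with cyclic fundamental group is covered by the complement of the core of the surgery solid torus inside $S^3$ (or inside the relevant lens space), and that core complement is a knot complement in $S^3$ precisely when the filling yields $S^3$, while for the lens-space fillings one passes to the induced cover. Concretely, I would identify the three knot complements as the preimages, under the orbifold covering, of the cores of the three solid tori glued in along the three special slopes; each is a cyclic cover of $\bnm$. The degree of each covering map is read off from the order of the homology/the filling: the $(1,0)$ slope gives a degree-$|p|$ cover, the $(18,1)$ slope a degree-$|49q+18p|$ cover, and the $(19,1)$ slope a degree-$|49q+19p|$ cover.

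The distinctness of the three degrees then follows from the distinctness of the three integers $|p| = |n|$, $|49m+18n|$, and $|49m+19n|$. I would note that $49m+18n$ and $49m+19n$ differ by exactly $n \ge 1$, so they are never equal; and for all but finitely many pairs $(n,m)$ the value $|n|$ differs from both of the other two and all three are strictly larger than $1$. This is where the hypothesis $(n,7)=1$ and the phrase ``all but finitely many'' in Theorem \ref{mainthm1} do their work: one must exclude the small exceptional pairs where two of these orders coincide or where a filling degenerates.

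The main obstacle I anticipate is not the arithmetic of the degrees but verifying that each lens-space filling genuinely produces a \emph{knot complement} covering $\bnm$ — i.e., that the core of each surgery torus, once lifted through the orbifold quotient, is a knot in $S^3$ rather than merely a knot in a lens space or a link. This requires checking that the relevant covers are cyclic and that the covering manifold is $S^3$ minus a single unknotted core, which is exactly what the Berge-manifold structure and \cite[Cor 2.9]{Berge} guarantee: drilling the unknotted cusp yields one of the three knots-in-a-solid-torus with three $D^2\times S^1$ fillings, so each of the three fillings caps off to $S^3$ and the dual core is a knot complement. I would therefore organize the argument so that the Berge/\cite{GW} machinery supplies the three knot complements and their covering degrees, and reserve the final paragraph for the elementary verification that the three degrees $|n|$, $|49m+18n|$, $|49m+19n|$ are pairwise distinct for all but finitely many $(n,m)$.
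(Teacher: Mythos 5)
Your outline follows the same skeleton as the paper's proof (fill the knotted cusp along the three Berge slopes, realize the knot complements as the covers dual to the cores of the filling tori, read the degrees off the orders of the resulting cyclic groups), but the step you defer to citations is exactly where the content lies, and the citations do not cover it. The central gap is the knot-versus-link step. You correctly name it as the main obstacle, but then assert it ``is exactly what the Berge-manifold structure and \cite[Cor 2.9]{Berge} guarantee.'' It is not: Berge's corollary produces the three solid-torus fillings, hence the three cyclic fillings $O_j$ of $\bnm$, but it says nothing about whether the core of the filling solid torus \emph{generates} the finite cyclic group $\pi_1^{orb}(O_j)$, which is precisely the condition for its preimage under the covering $S^3 \rightarrow O_j$ to be connected. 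The mechanism of \cite{GW} hides this issue because for a knot in $S^3$ the dual core of a surgery torus automatically generates $H_1$ of the filled manifold; but $\bnm$ is a knot complement in a lens space (or lens-space orbifold), and there this is false in general. The correct argument uses that the two components of the Berge link have linking number $7$: for instance, after the $(1,0)$ filling the core represents $7$ times a generator of $\pi_1^{orb}(O_j) \cong \Z/n\Z$, which generates if and only if $(n,7)=1$; if one allowed $7 \mid n$, the preimage of the core would be a $7$-component link and the lemma would be false. So $(n,7)=1$ is not, as your proposal has it, bookkeeping for the degree arithmetic --- it is what makes the covers knot complements at all, and your argument never establishes this.

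There are two further problems. First, when $\gcd(n,m)=r>1$ the objects $\bnm$ and $O_j$ are orbifolds with nonempty singular locus, so before one can speak of ``the cover $S^3 \rightarrow O_j$'' one must know that a closed $3$-orbifold with finite cyclic orbifold fundamental group is good with universal cover $S^3$; the paper gets this from the Orbifold Theorem \cite[Thm 2]{BP}, and the manifold-level machinery you invoke does not supply it. Second, the distinctness argument is flawed: $|49m+18n|$ and $|49m+19n|$ are absolute values, so ``they differ by $n$'' does not make them distinct (they coincide when $98m+37n=0$), and the loci where two of the three orders $n$, $|49m+18n|$, $|49m+19n|$ agree are infinite families (e.g.\ $n=49k$, $m=-17k$ gives $n=|49m+18n|$), not finite exceptional sets. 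Every such coincidence forces $7 \mid n$, so the hypothesis $(n,7)=1$ eliminates them for \emph{all} admissible pairs; this matters because the lemma asserts distinct covering degrees for every $\bnm$, and ``all but finitely many'' is both weaker than what is claimed and not what discarding finitely many pairs could achieve.
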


\begin{proof}
For a fixed $\bnm$, let $r=(n,m)$ and consider $\bnm$ as the 
union of the complement of a knot in a solid torus, $T_1$  and 
a solid torus with core a singular locus of order r, $T_2$ 
(see Figure \ref{twoTori}).

\begin{figure}[htp]
\begin{center}
\includegraphics[height = 1.5 in]{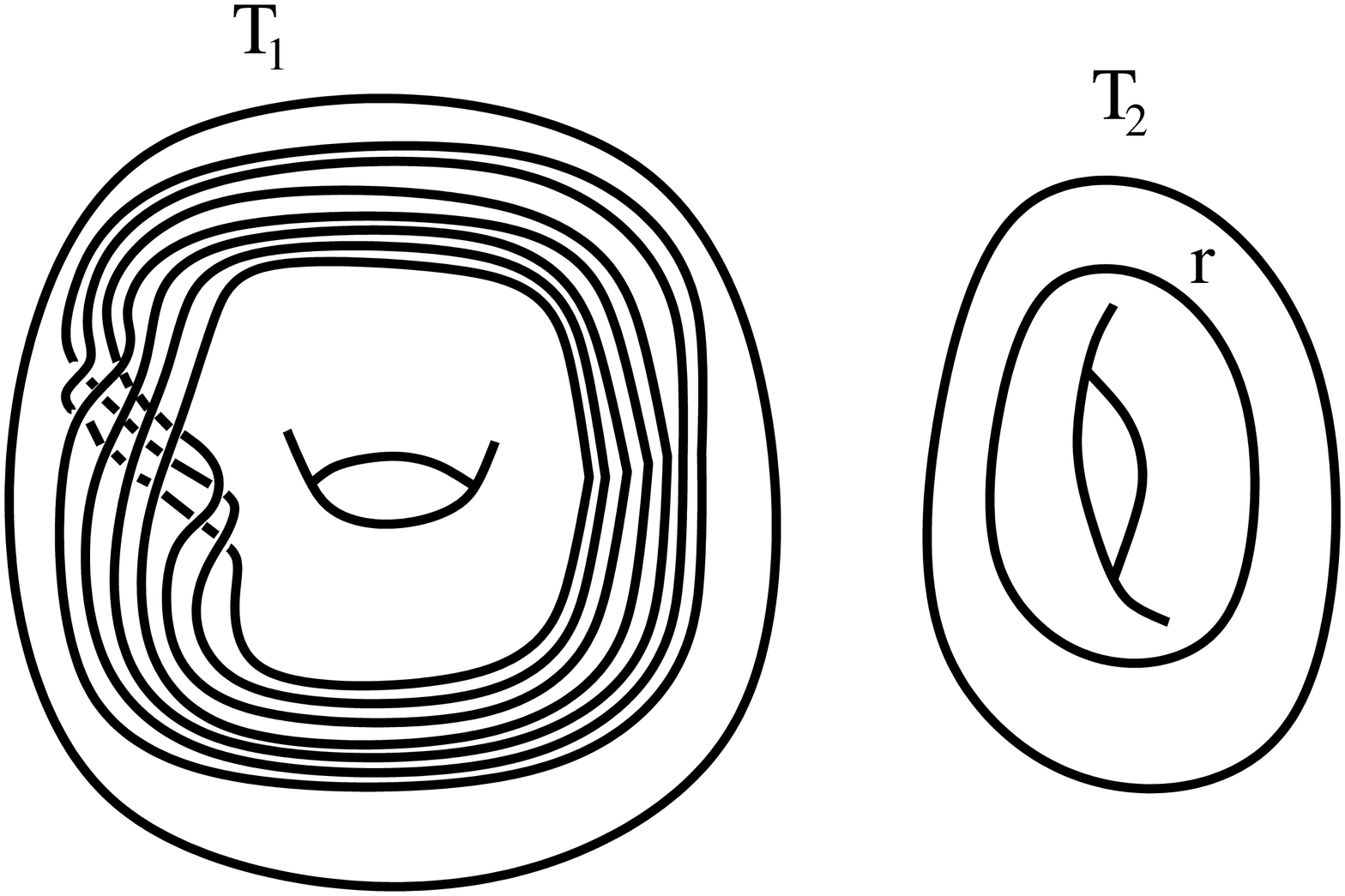}
\caption{\label{twoTori} The decomposition of a surgered $\bnm$ along a torus }
\end{center}
\end{figure}

By \cite[Cor 2.9]{Berge}, $T_1$  admits three Dehn surgeries 
that result in a solid torus. 
Thus, $\bnm$ admits three Dehn surgeries that are homeomorphic 
to $T_2$ and a solid torus
glued together along their boundaries. Each orbifold 
$O_j$ $(j \in \{1,2,3\})$ resulting
from one of these Dehn surgeries has 
underlying space a lens space with $\pi_1 ^{orb} (O_j)$ 
finite cyclic. 

In fact, $|\pi_1^{orb}(O_j)|$ is distinct for each choice of $j$. 
To see this we observe, as noted above, that $O_j$ 
is an orbifold with underlying space a lens space.  
Moreover, this underlying space is a lens space with 
fundamental group of order either
$\frac{n}{r}$, $|49\frac{m}{r}+18\frac{n}{r}|$, or $|49\frac{m}{r}+19\frac{n}{r}|$ depending on the choice of surgery on $T_1$ 
(see $\S$ \ref{prelim}). 
Splitting $O_j$ into a solid torus coming 
from the Dehn surgery on $T_1$ and $T_2$ the solid torus core a singular curve,
we can compute $\pi_1 ^{orb}(O_j)$ using van Kampen's theorem. 
Thus, the orders of the each fundamental group increase by a factor of r 
and $|\pi_1 ^{orb}(O_j)|$ is either $n$, $r\cdot |49\frac{m}{r} + 18\frac{n}{r}|$ or 
$r\cdot |49\frac{m}{r} + 19\frac{n}{r}|$ 
which take on three distinct values for fixed $n$, $m$ and $r$.

In addition, by the Orbifold Theorem (see \cite[Thm 2]{BP}) and the above argument that $\pi_1 ^{orb}(O_j)$ is finite cyclic, each $O_j$ has $S^3$ as its universal cover.  Denote this 
covering map $\phi_j : S^3 \rightarrow O_j$. 
%\textbf{fill in details here}
We may view $O_j$ as the union of the solid torus 
torus coming from the cusp Dehn filling of $\bnm$ and the complement
of this solid torus, which we denote by $B$. 

Hence $\phi_j^{-1}(B)$ is a knot
or link exterior in $\Sth$. Since $(n,7)=1$ and the 
singular set of $T_2$ has linking number 7 with the 
knotted cusp of $\bnm$, the boundary of
$\phi_j ^{-1}(B)$ is connected. 
Hence, if $(n,7)=1$, $\bnm$ will be covered by three knot complements 
in $\Sth$. Also, since the orders of $|\pi_1^{orb}(O_j)|$ are distinct,
the covering degree of $\phi_j$ will take on a distinct value for each $j$. 

\end{proof}

\begin{rem}
When $n=1$, the classification of exceptional Dehn surgeries in 
\cite[Table A.1, Rem A.3]{MP} shows that $\bnm$
is hyperbolic. Hence, $\beta_{1,m}$ is a hyperbolic knot complement 
that admits three cyclic surgeries.
\end{rem}

\section{Proof of The Main Theorem}\label{mainproof}

In this section, we prove Theorem \ref{mainthm1}. 
Also for this section, we consider $\onm$, $\Delta_{n,m}$, 
and $\Omega_L$ as subgroups of $\PSLTC$.

\begin{proof}[Proof of Theorem \ref{mainthm1}]

Using Lemma \ref{setupbetan}, each $\bnm$ is covered by three 
knot complements such that the covers are of distinct degrees. 
Also, the Hyperbolic Dehn Surgery Theorem 
\cite[Thm 5.8.2]{Thurston1} 
shows that all but at most finitely many of the $\bnm$ are hyperbolic. 
For the rest of the proof we only consider those 
$\bnm$ that are hyperbolic. Given this condition, each $\bnm$
we consider is covered by 
three distinct knot complements. By \cite[Thm 1.3]{BBW}, 
to prove Theorem \ref{mainthm1} it suffices to 
show that the knot complements covering $\bnm$ 
do not have hidden symmetries.

Suppose an infinite number of the hyperbolic
knot complements that cover $\bnm$ admit 
hidden symmetries. By the discussion in $\S$\ref{prelim_cusps}, 
every such a knot 
complement will non-normally cover an orbifold $Q_{n,m}$
with a rigid cusp. Furthermore, on passage to a subset of the $\bnm$, 
we can assume that the orbifolds $Q_{n,m}$ have the same type 
of rigid cusp, $C$. Let $\onm = \pi_1 ^{orb}(\bnm)$, $\Delta_{n,m}=\pi_1^{orb}(Q_{n,m})$ and let 
$P\subset \PSLTC$ be the peripheral subgroup of 
$\Delta_{n,m}$. We may assume that each $\onm$ is 
conjugated so that $P$ has a fixed representation in $\PSLTC$. 
Since $\bnm$ has one cusp, notice that $\Delta_{n,m} =P\cdot\onm$

By Thurston's Hyperbolic Dehn Surgery Theorem 
\cite[Thm 5.8.2]{Thurston1}, the volumes of the $\bnm$ 
are bounded from above by the volume of the Berge
manifold. In addition, the minimum 
volume of a non-compact oriented hyperbolic
3-orbifold is $\frac{v_0}{12}$ (see \cite{Me}). 
Hence, $vol(Q_n)\geq\frac{v_0}{12}$.  Thus, we can further 
subsequence to arrange that $\bnm$ covers $Q_{n,m}$,
that the $Q_{n,m}$'s have the same type of rigid cusp, 
and that the covering degree is fixed, say d.

Since $\bnm $ is obtained by Dehn surgery on 
the Berge manifold, the $\onm$ will converge 
algebraically and geometrically to $\Omega_L$, the fundamental
group of the Berge manifold
(see \cite[Thm 5.8.2]{Thurston1}).
As P was a fixed group in our construction,  
$\Delta_{n,r}$ also converges algebraically and geometrically to $P\cdot\Omega_L.$

We have the following diagram:

$$\xymatrix{
\Delta_{n,m}  \ar[r]^{(n,m)\rightarrow \infty} & P\cdot\Omega_L\\
\onm \ar@{^{(}->}[u]_d \ar[r]^{(n,m)\rightarrow \infty} & \Omega_L \ar@{^{(}->}[u]_d
}$$

Note, $[P\cdot\Omega_L: \Omega_L] =d< \infty$. 
Let $Q_T=\Hth/P\cdot\Omega_L$. $Q_T$ has two cusps: a torus cusp, 
corresponding to the cusp created by 
geometric convergence from Dehn surgery,
and a rigid cusp, corresponding to the
cusp with peripheral group $P$.

However by Lemma \ref{coveringLemma}, such a limiting $Q_L$ cannot exist.
Hence, at most finitely many of the $\bnm$ have hidden symmetries.

\end{proof}

\begin{rem}
To find explicit examples of hyperbolic knot complements with three knot 
complements in the commensurability class, 
we can use the computer program snap to show directly that there are no hidden 
symmetries. Specifically,
for m=0 and n=2,3,4,5,6,7, 
$\bnm$ is hyperbolic and snap show us that $\bnm$ has 
an invariant trace field with real embeddings.
These fields cannot contain $\Q(i)$ or $\Q(\sqrt{-3})$ 
as subfields. Thus, these knot complements do not have hidden 
symmetries (recall $\S$ \ref{prelim_cusps}) and there are exactly three knot complements 
in the commensurability classes.

\end{rem}

\section{Remarks}\label{remarks}

The following theorem provides a partial classification of hyperbolic orbifolds 
covered by three knot complements. It can be seen as a direct corollary to a 
result of \cite{BBW}. However, a proof is provided below for completeness.

\begin{thm}\label{theseAreTheOnlyOnes}
Let $O$ be a closed 3-orbifold and let $K$ be a knot in $O$ that 
is disjoint from singular locus of $O$. If $O-K$ is:
\begin{enumerate}
\item hyperbolic,
\item covered by 3 knot complements, 
\item does not admit hidden symmetries, and
\item $O$ has non-empty singular locus,
\end{enumerate}

then $O-K \cong \bnm$ for some pair $(n,m)$.
\end{thm}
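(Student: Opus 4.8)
The plan is to follow the structural analysis of \cite{BBW} and reduce the problem to recognizing the Berge manifold's surgery description. Since $O-K$ does not admit hidden symmetries and is covered by three knot complements, the result \cite[Thm 1.3]{BBW} tells us that all three knot complements cover a common minimal orbifold, and that this orbifold carries a finite cyclic surgery for each of the three covering knot complements. The first step is therefore to extract from the hypotheses that $O-K$ itself (or its minimal orbifold quotient) admits \emph{three} distinct finite cyclic surgeries. Condition (4), that $O$ has non-empty singular locus, is what guarantees we are genuinely in the orbifold setting rather than the classical knot-in-$S^3$ setting; combined with the three cyclic surgeries, this forces the underlying manifold data to match a knot in a solid torus admitting three solid-torus fillings.

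The key step is then to invoke the classification of Berge \cite[Cor 2.9]{Berge}: a knot in a solid torus that admits three distinct $D^2\times S^1$ fillings is precisely one of the knots arising from the Berge manifold construction. First I would use the three cyclic surgeries on $O-K$ to produce, by drilling along the singular locus coming from (4), a knot complement in a solid torus $T_1$ admitting three solid-torus fillings. By Berge's corollary this $T_1$ is unique up to the parametrized family, so $O-K$ is recovered as the union of $T_1$ with a solid torus $T_2$ whose core is the singular locus of order $r$, exactly the decomposition used in the proof of Lemma \ref{setupbetan} (see Figure \ref{twoTori}). Re-gluing $T_2$ along $T_1$ according to the surgery framing reconstructs $O-K$ as $(n,m)$ Dehn surgery on the unknotted cusp of the Berge manifold, i.e. as some $\bnm$.

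The main obstacle I expect is \emph{bookkeeping of slopes and orders}: one must check that the finite cyclic surgery coefficients and the singular-locus order $r$ are consistent with the parametrization $(n,m)$, and in particular that the three resulting lens-space orders $n$, $r\cdot|49\tfrac{m}{r}+18\tfrac{n}{r}|$, and $r\cdot|49\tfrac{m}{r}+19\tfrac{n}{r}|$ arising in Lemma \ref{setupbetan} are realized. One must also verify that no \emph{other} knot-in-solid-torus family could yield three cyclic (as opposed to merely solid-torus) fillings; this is handled by the uniqueness clause of \cite[Cor 2.9]{Berge}, which asserts these are the only knots in solid tori with three solid-torus surgeries, so the matching is forced. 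Finally, hypothesis (1) ensures we remain in the hyperbolic regime throughout, and (3) is what licenses the appeal to \cite[Thm 1.3]{BBW} at the outset; with these in place the identification $O-K\cong\bnm$ follows.
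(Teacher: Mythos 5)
Your proposal reproduces, essentially correctly, the first half of the paper's argument, but it has a genuine gap: you implicitly assume throughout that the singular locus of $O$ is a single circle, so that drilling it out of $O-K$ yields a knot complement in a solid torus. The structure theorem you actually need here is \cite[Thm 1.2]{BBW} (not Thm 1.3, which is the hidden-symmetries statement), and it gives only that $|O|$ is a lens space and that the singular locus $\gamma$ is a \emph{non-empty subset} of the two cores of a genus one Heegaard splitting of $|O|$. So $\gamma$ may have two components. In that case $M=O-\gamma-K$ is a knot complement in $T^2\times I$, not in a solid torus, and \cite[Cor 2.9]{Berge} says nothing about it directly: your step ``drilling along the singular locus produces a knot complement in a solid torus $T_1$ admitting three solid-torus fillings'' simply does not apply, and no amount of bookkeeping of slopes and orders repairs it.

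The paper spends roughly half of its proof eliminating exactly this two-component case, and your proof is incomplete without some such argument. The idea is: if $\gamma=\gamma_1\cup\gamma_2$, then each of the three finite cyclic fillings of $O-K$ gives a $T^2\times I$ filling of $M$, so Dehn filling the cusp of $M$ corresponding to one singular component along \emph{any} slope $(1,n)$ produces a knot in a solid torus that still admits three $S^1\times D^2$ fillings, hence by Berge's uniqueness lies in his family, all of whose members have linking number $7$ with the core. Writing $l_i$ for the linking number of the knot with $\gamma_i$, one first rules out $l_1=0$ (again by Berge's classification), and then notes that the linking number of the filled-in knot with the core is $l_2+n\cdot l_1$, which cannot equal $7$ once $n$ is large. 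This contradiction kills the two-component case; only then does the solid-torus argument you gave (which matches the paper's one-component case) complete the proof.
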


\begin{proof}
Let $\gamma$ be the singular locus of $O$. Denote $|O|$ the underlying
space of $O$. By \cite[Thm 1.2]{BBW} 
and the assumptions, we know that $|O|$ is a lens space, $\gamma$ is  
a non-empty subset of the cores of a genus 1 Heegaard splitting of $|O|$, 
and if $S^3-K$ covers $O-K$ then it does so cyclically and corresponds 
to a finite cyclic filling of $O-K$. Finally, denote
$M=O-\gamma-K$

First assume $\gamma$ has one component.
Each of the three knot complements covering $O-K$ will correspond 
to a $S^1\times D^2$ filling on knotted cusp of $M$. 
Again, we appeal to the fact that there is a a unique family
of knots in solid tori that admits 3 non-trivial $S^1 \times D^2$
fillings (see  \cite[Cor 9.1]{Berge}). Hence, M is obtained by performing
$(1,m)$ surgery on the unknotted cusp of the Berge manifold then drilling out
the core of the surgered torus. Gluing back in the neighborhood 
of the fixed point set of $\langle \gamma \rangle$ gives us $\bnm$ for
some $n,m$.

Now, assume that $\gamma$ has two components $\gamma_1$ and $\gamma_2$.
$M=T^2\times I - K'$, where $K'$ is a knot. 
Each of the three finite cyclic 
on $O-K$ corresponds $M$ admitting a $T^2\times I$ filling. Hence,
Dehn filling along the cusp corresponding to $\gamma_1$ will produce a knot 
complement in $D^2\times S^1$ with three $D^2\times S^1$ fillings.

Denote $l_1$ to be the linking number of
$\gamma_1$ and $ K'$ and $l_2$ to be the linking number of 
$\gamma_2$ and $K'$. If $l_1$ is zero, 
$K'$ would be a knot in a solid torus that is not a 1-braid after
$(1,0)$ on $\gamma_2$ but has two non-trivial $S^1\times D^2$ fillings. 
This contradicts
 \cite[Cor 9.1]{Berge}. Hence, we may assume $l_1\ne 0$ and 
$l_2\ne 0$. 

Also, $(1,n)$ surgery on $\gamma_2$ will produce a knot $K''$ in a solid torus
that has linking number $l_2 + n\cdot l_1$ with $\gamma_2$. In particular for 
large enough $n$  $l_2 + n\cdot l_1 \ne 7$. Hence, in cannot be in the 
family of knots that admit two non-trivial $S^1\times D^2$ fillings.  
\end{proof}

One might hope to relax condition $(4)$ above. However, Brandy Guntel 
pointed out that the $K(7,5,2,-1)$ knot complement (see Figure
\ref{BrandysKnot}) is hyperbolic 
and admits two non-trivial cyclic surgeries. The fundamental group 
of one of these lens spaces is of order 32. By our original discussion in $\S$\ref{bergeDef}, knot complements obtained by Dehn surgery on the unknotted cusp of the Berge manifold have lens spaces of order $|49r-18|$ and $|49r-19|$ neither of which can be 32. Hence, the $K(7,5,2,-1)$ complement is not one of the $\bnm$. 
However, since the invariant trace field of the $K(7,5,2,-1)$ is an odd degree
extension of $\Q$, we see that this knot complement does not admit hidden symmetries and the $K(7,5,2,-1)$ has exactly three knot complements in its comensurability class (see \cite[Cor 5.4]{RW}). 

\begin{figure}
\begin{center}
\includegraphics[height = 1.5 in]{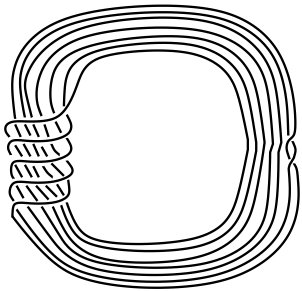}
\caption{\label{BrandysKnot} The K(7,5,2,-1) }
\end{center}
\end{figure}

As mentioned above $(1,m)$ surgery on 
the unknotted cusp of the Berge manifold
produces Berge knots. It seems natural to ask if any 
hyperbolic Berge knots can have hidden 
symmetries. More generally, we might ask if any 
hyperbolic knot complements can 
have hidden symmetries and admit non-trivial lens space surgeries. 
As discussed in $\S$
\ref{intro}, there are three hyperbolic 
knot complements known to have hidden symmetries: the complements of the 
two dodecahedral knots
of Aitchison and Rubinstein, and the figure eight knot complement
(see \cite{AR},\cite{NR1}). %and Figure \ref{dodecaknots}). 
Using SnapPea one can see that both dodecahedral knots are amphichiral.
Thus, by \cite[Cor 4]{CGLS} they cannot admit a lens space surgery.  
Additionally, it is well known that the figure eight knot complement
does not admit a lens space surgery (see 
\cite{Takahashi} for example).

\section{Acknowledgments}
First, I would like to thank Alan Reid for raising the questions that 
lead to this paper and thoughtfully guiding this work from its 
formative stages to completion.
I also would like to thank Cameron Gordon for showing
me the family of knot complements with three lens space surgeries
$\beta_{1,m}$. Third, I would like to thank Genevieve Walsh and Steve Boyer
for a number of enlightening conversations and their 
suggestions on early versions of this paper.   
Finally, I would like to thank my 
fellow graduate students for a number of helpful conversations.

\end{document}